\documentclass[letterpaper, 10 pt, conference]{ieeeconf}  

\IEEEoverridecommandlockouts                              

\usepackage{amsmath} 
\usepackage{amssymb}  
\usepackage{bm}
\usepackage{algorithm}
\usepackage{algpseudocode}
\usepackage{mathtools}
\usepackage{nicefrac}
\usepackage{caption}
\usepackage{xcolor}
\usepackage{cancel}
\definecolor{deepred}{RGB}{210, 0, 0}

\newtheorem{definition}{Definition}
\newtheorem{problem}{Problem}

\newtheorem{proposition}{Proposition}
\newtheorem{remark}{Remark}

\newtheorem{assumption}{Assumption}

\title{\LARGE \bf
Analytic Non-Gaussian Confidence Boundary Method for Chance-Constrained Trajectory Control}

\author{Ethan R. Burnett$^{1}$ and Spencer Boone$^{2}$
\thanks{$^{1}$Ethan R. Burnett is affiliated with the Department of Aerospace Engineering and Engineering Mechanics, University of Texas at Austin, Austin, TX 78712, USA {\tt\small ethan.burnett@utexas.edu}}
\thanks{$^{2}$Spencer Boone is affiliated with ISAE SUPAERO, Toulouse, France {\tt\small spencer.boone@isae-supaero.fr}}
}

\begin{document}

\maketitle
\thispagestyle{empty}
\pagestyle{empty}

\begin{abstract}
Standard chance constrained control algorithms typically rely on the assumption that uncertainties in vehicle states obey Gaussian statistics. Highly nonlinear systems tend to disrupt Gaussianity, challenging standard chance-constrained control methods. This paper develops a non-Gaussian confidence boundary parameterization technique for such cases where the problem departs appreciably from the Gaussian assumption. The approach is to consider the true confidence boundary as a perturbation of the one predicted from covariance, deriving perturbed boundary geometry from computed higher-order statistical moments. Applying this technique to so-called ``banana-shaped distributions" (found e.g. in orbital mechanics problems) enables a simple parameterization of the confidence boundary using the skew and kurtosis tensors. The method is then applied to an impulsive stochastic spacecraft maneuver targeting problem in two-body dynamics. An algorithmic implementation outperforms a standard linear covariance-based approach in computing control parameters satisfying certain probabilistic bounds on the non-Gaussian distribution.
\end{abstract}

\section{Introduction}
Any practical control problem is characterized by some degree of uncertainty in the system states. Chance-constrained optimal control algorithms seek to achieve specific control goals while establishing probabilistic guarantees on the performance of an uncertain system. The system is often a vehicle, for which path constraints can be paramount. Most existing algorithms assume that system state uncertainties obey Gaussian statistics (e.g.,~\cite{OguriMcMahoIEEE2019}). Much of the work in chance-constrained trajectory control has focused on terrestrial robotic path-planning
~\cite{BlackmoreIEEE2011,RiskAlloc2008}. For such applications, the dynamics are often sufficiently linear or the measurement update frequency is generally high enough that the Gaussian assumption holds.

Control problems encountered in spaceflight differ from common terrestrial applications in several important ways. They are characterized by a potential for long periods without state measurements or corrective control maneuvers, and spacecraft are subject to the action of nonlinear dynamics in a chaotic environment \cite{Koon:2006rf}. Furthermore, the computational capabilities on a spacecraft are greatly reduced compared to terrestrial platforms, due to power limitations and design for a high-radiation environment. For safe long time-horizon maneuvers to be planned, spacecraft operators must accept the resulting non-Gaussian statistical distribution, and chance constraints must be enforced on the non-Gaussian distribution itself. To this end, Ref.~\cite{boone_cdc} applied a chance constrained approach with a Gaussian Mixture Model representation of the non-Gaussian distribution. The approach outperformed a linearized covariance-based approach, but it was numerically intensive. For rapid mission analysis or onboard implementation, more computationally efficient methods are needed. 

Some adjacent studies from robotics are also worth mentioning. Ref.~\cite{WangNonGauss2020} developed a non-Gaussian chance constraint approach in the case that state constraints can be defined as a polynomial in a random vector. Ref.~\cite{StatContractNG2026} develops a chance-constrained trajectory optimization approach for learning-based motion planners and controllers. Sometimes the non-Gaussian problem can be partially side-stepped, if better working coordinates can be found which render the dynamics less nonlinear, reducing the rate of departure of a distribution from initial Gaussianity (e.g. orbital elements for Keplerian dynamics \cite{junkAdven}, or exponential coordinates for planar robot problems~\cite{BananaIsGaussian2}). However, such coordinates are not always available, especially in complex dynamical systems. Additionally, constraints in trajectory optimization problems are often most conveniently expressed in native physical coordinates. Overall, this area of research extends beyond spaceflight, where new findings are potentially useful for a broad class of control systems for which dynamic nonlinearity, computational constraints, and/or non-Gaussian uncertainty are driving concerns.

In this paper, we develop a method for analytically parameterizing the boundary of a non-Gaussian distribution at a specified confidence (or cumulative probability) level. Finite moments of the distribution are estimated via a deterministic technique and used to inform geometric corrections on baseline geometry driven by covariance and the Gaussian assumption. Modern uncertainty propagation techniques can estimate ``higher order" statistical moments (so-named because moments beyond mean and covariance are often neglected), with e.g. third- and fourth-order moments available at modest computational cost. To compute these moments, we make use of the conjugate unscented transform~\cite{CUT_ACC}. In possession of these moments, the perturbed confidence boundary can be computed and evaluated efficiently.

This paper is structured as follows. We begin with a preliminary discussion on chance-constrained control, then address the standard Gaussian case. We then use some important facts from Gaussian and non-Gaussian statistics to derive the corrected form of a confidence boundary in the case that the first four moments (mean, covariance, skew, and kurtosis) are sufficient descriptors of non-Gaussianity. The paper concludes with use of the confidence boundary in a chance-constrained control algorithm applied to a stochastic spacecraft maneuver targeting problem.

\section{Preliminary}
\subsection{Model}
Consider a discrete nonlinear dynamical system with state vector $\bm{x}\in\mathbb{R}^{n}$, where $\bm{x}_{l}$ specifically denotes the value of the state at time $t_{l}$. The system evolves as
\begin{equation}
\label{StateDyn}
\bm{x}_{l+1} = \bm{\phi}(\bm{x}_{l} + \underline{B}\bm{u}_{l})
\end{equation}
where $\bm{\phi}$ denotes the flow of the dynamics from time $t_{l}$ to $t_{l+1}$, $\bm{u}_{l}\in\mathbb{R}^{m}$ is the control at time $t_{l}$, with associated control matrix $\underline{B}\in\mathbb{R}^{n\times m}$. This discrete dynamical system is a good approximation of the impulsive spacecraft control problems that motivate this work. We consider $\bm{x}_{l}$ as a random vector with mean $\bm{\mu}$, probability density function $p(\bm{x}_{l},t_{l})$, and covariance $P_{l}$. Because $\bm{\phi}$ is nonlinear, even if the initial statistics at $t_{0}$ are Gaussian, at subsequent times this will generally not be the case. For the class of control problems motivating this work, we seek to minimize an objective function $\bm{J}$ subject to path constraints $\bm{g}$. Due to the stochasticity of system~\eqref{StateDyn}, $\bm{g}$ should be satisfied to some probabilistic confidence level. Chance constraints provide a natural way to encode such probabilistic path requirements.
\subsection{Chance Constrained Control}

\begin{definition}[State chance constraint]
Let $\mathcal{S}_{l,j}\subset\mathbb{R}^{n}$ denote the feasible set associated with the $j$\textsuperscript{th} state constraint at time $t_l$, with $j=1,\ldots,N_c$. A state chance constraint is written as
\begin{equation}
\Pr\!\left[\bm{x}_l\in\mathcal{S}_{l,j}\right] \ge 1-\delta_{l,j},
\label{eq:state_chance_constraint}
\end{equation}
where $0<\delta_{l,j}<1$ is the allowable probability of violation.
\end{definition}

\begin{problem}[Chance-constrained control problem]
In this work we solve the simplified problem. Given the dynamics in Eq.~\eqref{StateDyn}, determine a control sequence $\{\bm{u}_l\}_{l=0}^{T-1}$ that minimizes the objective $J$ subject to the dynamics and the state chance constraints:
\begin{subequations}
\begin{align}
\underset{\bm{u}_l}{\text{min}} \ \ J(\bm{u}_{l}) \\ \text{s.t.}\quad & \bm{x}_{l+1}=\bm{\phi}(\bm{x}_l+\underline{B}\bm{u}_l), \\
& \Pr\!\left[\bm{x}_l+1\in\mathcal{S}_{l+1,j}\right] \ge 1-\delta_{l+1,j}.
\label{prob:chance_constrained_control}
\end{align}
\end{subequations}
\end{problem}
In the standard Gaussian setting, and for a half-space constraint of the form $\mathcal{S}_{l,j}=\{\bm{x}:\bm{h}_{l,j}^\top\bm{x}\le g_{l,j}\}$, 
the probabilistic constraint can be converted into a deterministic inequality involving only the mean and covariance~\cite{OguriMcMahoIEEE2019,BlackmoreIEEE2011}.
The present paper is concerned with cases in which the propagated statistics are appreciably non-Gaussian, so this approach
is no longer an adequate description of the relevant confidence geometry. Rather than introducing a mixture-based reformulation as in Ref.~\cite{boone_cdc}, our approach is to approximate the non-Gaussian confidence boundary directly using higher-order moments. This viewpoint motivates the constructions developed in the next section.
\section{Approximating Non-Gaussian Confidence Bounds}
Standard covariance-based confidence boundaries rely on the Gaussian confidence geometry implied by first and second moments alone. In nonlinear stochastic dynamics, state uncertainty distributions can become appreciably perturbed while the covariance remains well defined, so an ellipse inferred only from mean and covariance may misrepresent the true confidence boundary. 
Motivated by this fact, this work derives analytic corrections of the confidence bounds in the case of weakly non-Gaussian statistics, 
leveraging measures of non-Gaussianity from the moments beyond mean and covariance.
\subsection{Gaussian confidence contours}
Let $r \in \mathbb{R}^2$ denote a two-state subset (a ``slice") of a higher-dimensional random vector. Let $\mu \in \mathbb{R}^2$ and $\Sigma \in \mathbb{R}^{2\times 2}$ denote the corresponding slice mean and
covariance, with $\Sigma \succ 0$.

\begin{definition}[Gaussian confidence contour]
\label{def:gaussian_contour}
For a confidence scaling $k(P)>0$ corresponding to a specified confidence level $0<P<1$, the Gaussian confidence region is
\begin{equation}
\mathcal{A}_k \;\triangleq\; \left\{ \bm{r} \in \mathbb{R}^2:\; (\bm{r}-\bm{\mu})^\top \Sigma^{-1} (\bm{r}-\bm{\mu}) \leq k^2 \right\}.
\end{equation}
Its boundary $\partial\mathcal{A}_{k}$ is the Gaussian confidence contour.
\end{definition}

Let $\Sigma = R\Lambda R^\top$ with $\Lambda=\mathrm{diag}(\lambda_1,\lambda_2)$, $\lambda_1\ge \lambda_2>0$,
and $R$ a rotation, 
\begin{equation}
    R =
        \begin{bmatrix}
            \cos\theta & -\sin\theta \\
            \sin\theta & \cos\theta
        \end{bmatrix} .
    \label{eq:R_def}
\end{equation}
\begin{equation}
    \theta = \frac{1}{2} \,\operatorname{atan2}
    \!\big( 2\,\Sigma_{xy},\, \Sigma_{xx}-\Sigma_{yy} \big) .
    \label{eq:theta}
\end{equation}
where Eq.~\eqref{eq:theta} is in terms of components of $\Sigma$. Define the principal semi-axes
\begin{equation}
\overline{a} = k\sqrt{\lambda_1},\qquad \overline{b} = k\sqrt{\lambda_2},
\end{equation}
and the principal coordinates $u,v$ measuring along the principal axis directions. 

\begin{remark}[Standard parameterization]
\label{rem:ellipse_param}
In the case of Gaussian statistics, a convenient parameterization of the \textit{confidence boundary} $\partial \mathcal{A}_k$ at cumulative probability level $P$ is
\begin{equation}
u(t)=\overline{a}\cos t,\qquad v(t)=\overline{b}\sin t,\qquad t\in[0,2\pi),
\label{eq:local_uv2}
\end{equation}
with mapping back to the original sliced coordinates via the contour
\begin{equation}
\bm{r}(t)=\bm{\mu} + R\begin{pmatrix}u(t)\\ v(t)\end{pmatrix}.
\label{MapPrincReg}
\end{equation}
Define normalized principal coordinates $\hat u \triangleq u/\sqrt{\lambda_1}$ and
$\hat v \triangleq v/\sqrt{\lambda_2}$, for which the Gaussian boundary satisfies
$\hat u(t)=k\cos t$ and $\hat v(t)=k\sin t$. 
\end{remark}

\subsection{Moment tensors and Gaussian identities}

Let $\bm{X}\in\mathbb{R}^{d}$ be a random vector with finite central moments through order~4.
Let $\bm{\mu}=\mathbb{E}[\bm{X}]$, $\Sigma=\mathbb{E}[(\bm{X}-\bm{\mu})(\bm{X}-\bm{\mu})^\top]$, and define the central moment tensors
\begin{align}
M^{(3)}_{ijk} &\triangleq \mathbb{E}\!\left[(X_i-\mu_i)(X_j-\mu_j)(X_k-\mu_k)\right],\\
M^{(4)}_{ijkl} &\triangleq \mathbb{E}\!\left[(X_i-\mu_i)(X_j-\mu_j)(X_k-\mu_k)(X_l-\mu_l)\right].
\end{align}
\begin{remark}[Symmetries of moment tensors]
$M^{(3)}$ is invariant under any permutation of $(i,j,k)$ and $M^{(4)}$ is invariant under any permutation
of $(i,j,k,l)$.
\end{remark}

\begin{remark}[Gaussian moment identities]
\label{rem:gaussian_identities}
If $X$ is multivariate Gaussian, then
\begin{align}
M^{(3)}_{ijk} &= 0,\quad \forall i,j,k,\\
M^{(4)}_{ijkl} &= \Sigma_{ij}\Sigma_{kl} + \Sigma_{ik}\Sigma_{jl} + \Sigma_{il}\Sigma_{jk},\quad \forall i,j,k,l.
\end{align}
Equivalently, the 4th-order \textit{cumulant} tensor $\kappa$ vanishes:
\begin{equation}
\kappa_{ijkl} \triangleq M^{(4)}_{ijkl} - (\Sigma_{ij}\Sigma_{kl}+\Sigma_{ik}\Sigma_{jl}+\Sigma_{il}\Sigma_{jk})
\;=\;0.
\end{equation}
These identities are standard consequences of Isserlis' theorem for centered multivariate Gaussian variables~\cite{McCullagh1987}. They provide two tests of Gaussianity generated by the third- and fourth-order statistical moments. Efficient estimates of these moments are available via e.g the algorithm in Ref.~\cite{CUT_ACC}.
\end{remark}

\subsection{Modeling weakly non-Gaussian contours}

We now formalize a viewpoint for corrections due to non-Gaussianity: the Gaussian ellipse remains a useful baseline, but
the true confidence boundary locally departs from it. This inspires a flexible corrective geometric approach whereby expected structure of the confidence boundary is imposed, and the necessary relations of the contour parameters to the statistical moments are derived. In this work, we assume that departures from Gaussianity occur primarily through (i) a bend of the short-axis coordinate as a function of long-axis deviation and (ii) long-axis asymmetry, both to be discussed shortly. These corrections are depicted in Fig.~\ref{f:BananaCorr}, along with the resulting improved constraint satisfaction of the non-Gaussian state distribution at a certain confidence level.
\begin{figure}[h!]
\begin{center}
\includegraphics[]{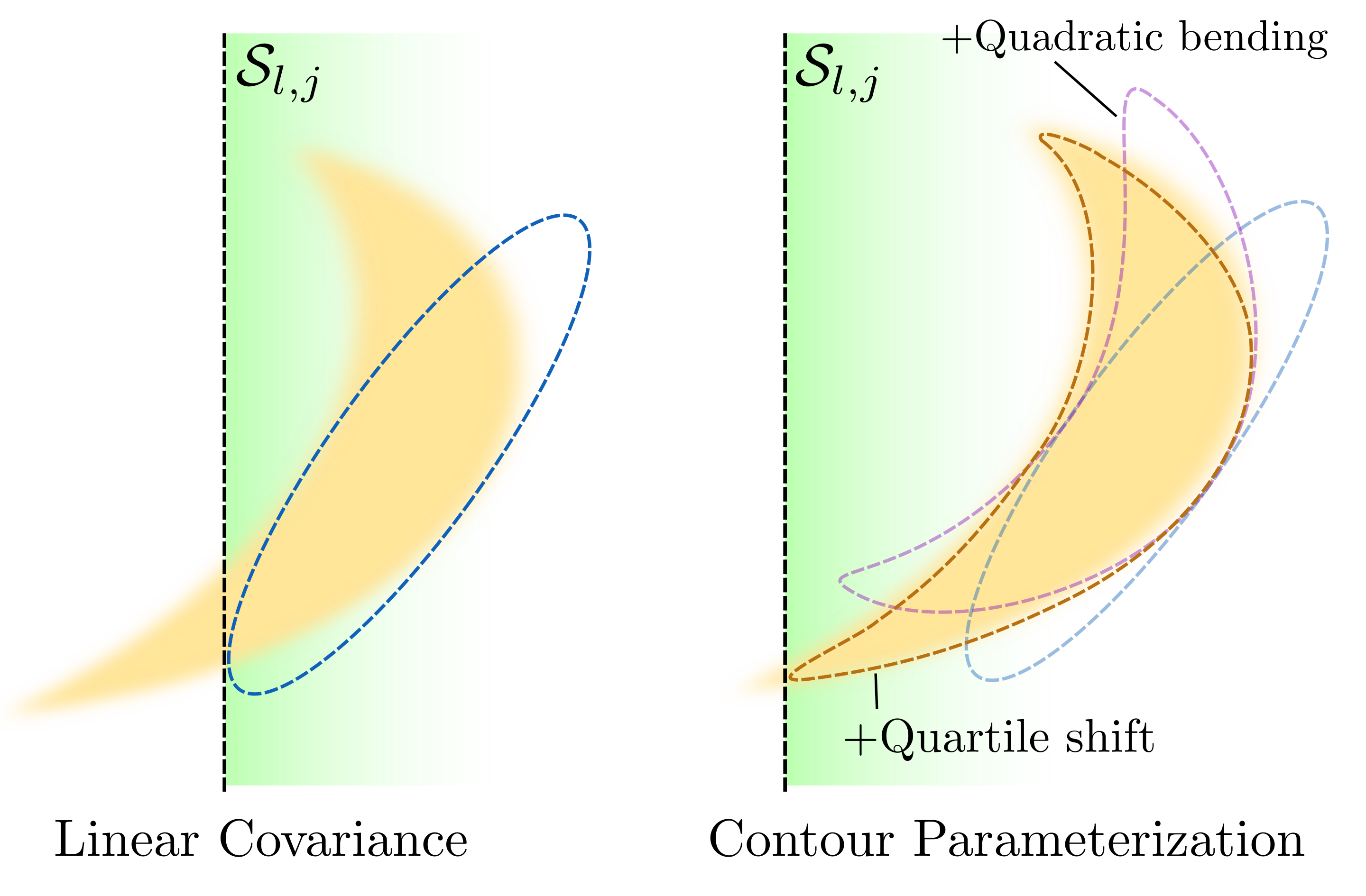}
\caption{Non-Gaussian chance constraints}
\label{f:BananaCorr}
\end{center}
\end{figure}

The form of the needed corrections is problem-driven and can be informed by study of the expected geometry of the propagated distribution. The purpose here is to demonstrate how such corrections can be computed systematically from non-Gaussian moments.

\begin{assumption}[Weakly non-Gaussian regime]
\label{ass:weak_ng}
In the normalized principal coordinates $(\hat u,\hat v)$, the confidence boundary of interest is a smooth deformation of the Gaussian contour $\hat u(t)=k\cos t$, $\hat v(t)=k\sin t$, and the
deformation can be approximated using only the first four central moments of the underlying distribution.
\end{assumption} 
Under the flow of nonlinear dynamics for an initially Gaussian distribution, the central portion of the propagated distribution is often still well-approximated by the Gaussian geometry induced by the mean and covariance. The largest visible departures appear farther from the mean along the extremities of the confidence boundary. We call a distribution satisfying this qualitative behavior ``weakly non-Gaussian". This motivates treating the non-Gaussian contour as a perturbation of the Gaussian ellipse and retaining only first-order corrections in parameters inferred from non-Gaussian moments. 
\subsubsection{First correction: bend (banana centerline curvature)}
\begin{remark}[Banana distributions]\label{rem:gobananas}
It is often observed in applications with nonlinear dynamics, for example orbital mechanics problems \cite{junkAdven}, that the initially ellipsoidal shape of some projection/slice of a distribution bends into a ``banana" shape under the flow of the nonlinear dynamics -- namely, that after affine (linear) stretching effects, the next observable deformation is quadratic in nature.
\end{remark}
\begin{assumption}[Quadratic bending]
\label{ass:bend_ansatz}
Noting that the lowest-order capture of the bending observed in banana distributions is a departure in the short axis $v$ that is quadratic in the long axis $u$ coordinate, for a banana distribution and a given two-state slice $q$, the normalized short-axis coordinate admits the approximate coupling
\begin{equation}
\hat v \approx \beta_q + \alpha_q \hat u^2,
\label{myansatzII}
\end{equation}
where $(\alpha_q,\beta_q)$ are slice-dependent coefficients to be determined from statistical moments, with index $q$ for each possible two-state slice. Higher-order couplings are assumed negligible.
\end{assumption}

Let the whitening map be $\begin{pmatrix}\hat u\\ \hat v\end{pmatrix}=W_q(\bm{r}-\bm{\mu})$, with
$W_q=\Lambda_q^{-1/2}R_q^\top$ for the slice covariance factorization $\Sigma_q=R_q\Lambda_q R_q^\top$. This transformation is convenient for our derivation. 
Define matrix components $\bm{a}=W_q^\top \bm{e}_1$, $\bm{b}=W_q^\top \bm{e}_2$ for $\bm{e}_{1} = (1, 0)^{\top}$, $\bm{e}_{2} = (0, 1)^{\top}$, and $\bm{\delta}=\bm{r}-\bm{\mu}$. If the covariance $\Sigma_{q}$ is propagated from some epoch value $\Sigma_{q,0}$, this can be done in either a linearized or nonlinear fashion. The whitening map could thus in principle be formed from either a linear or nonlinear covariance prediction. We use the linearly predicted covariance, denoted $\Sigma_{q,\text{l}}$. This is consistent with Assumption~\ref{ass:weak_ng} and the perturbative viewpoint adopted here: the Gaussian ellipse induced by affine stretching serves as the zeroth-order organizing geometry, and the banana terms to be derived will provide the first-order non-Gaussian correction. 
\begin{proposition}[Quadratic bending coefficients]
\label{prop:bend_coeffs}
Under Assumption~\ref{ass:bend_ansatz}, the least-squares coefficients minimizing
$\mathbb{E}\!\left[(\hat v-\beta-\alpha \hat u^2)^2\right]$ satisfy $\beta=-\alpha$ and
\begin{equation}
\label{eq:WhatIsAlpha}
\alpha \;=\;
\frac{\sum_{i,j,k} b_i a_j a_k\, M^{(3)}_{q,ijk}}
{\sum_{i,j,k,l} a_i a_j a_k a_l\, M^{(4)}_{q,ijkl} - 1}.
\end{equation}
In the Gaussian case, $\alpha=\beta=0$, by consequence of Remark~\ref{rem:gaussian_identities}. These coefficients provide the best fit of the assumed distribution for the specified ansatz.
\end{proposition}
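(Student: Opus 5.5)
The plan is to treat this as an ordinary linear least-squares problem in the two unknowns $(\beta,\alpha)$ with regressors $1$ and $\hat u^2$. Setting the partial derivatives of $\mathbb{E}[(\hat v-\beta-\alpha\hat u^2)^2]$ to zero gives the normal equations
\begin{align}
\mathbb{E}[\hat v] &= \beta + \alpha\,\mathbb{E}[\hat u^2], \\
\mathbb{E}[\hat v\hat u^2] &= \beta\,\mathbb{E}[\hat u^2] + \alpha\,\mathbb{E}[\hat u^4].
\end{align}
The objective is a convex quadratic in $(\beta,\alpha)$. It is strictly convex whenever $\mathrm{Var}(\hat u^2)>0$, which holds for any nondegenerate distribution. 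So the stationary point is the unique minimizer, and the whole task reduces to evaluating the four expectations in terms of the moment tensors.

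Next I will compute these expectations through the whitening map. Since $\hat u=\bm{a}^\top\bm{\delta}$ and $\hat v=\bm{b}^\top\bm{\delta}$ with $\bm{\delta}$ centered, we get $\mathbb{E}[\hat v]=0$. Using $\bm{a}^\top\Sigma_q\bm{a}=\bm{e}_1^\top W_q\Sigma_qW_q^\top\bm{e}_1=1$, we get $\mathbb{E}[\hat u^2]=1$. Multilinearity of expectation then gives
\begin{equation}
\mathbb{E}[\hat v\hat u^2]=\sum_{i,j,k} b_i a_j a_k M^{(3)}_{q,ijk},
\qquad
\mathbb{E}[\hat u^4]=\sum_{i,j,k,l} a_ia_ja_ka_l M^{(4)}_{q,ijkl}.
\end{equation}
Substituting into the first normal equation yields $\beta=-\alpha$. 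Substituting that into the second gives $\alpha(\mathbb{E}[\hat u^4]-1)=\mathbb{E}[\hat v\hat u^2]$, which is Eq.~\eqref{eq:WhatIsAlpha}. The denominator equals $\mathrm{Var}(\hat u^2)$ and is therefore positive.

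For the Gaussian case I will invoke Remark~\ref{rem:gaussian_identities}. There $M^{(3)}=0$, so the numerator vanishes. The Isserlis identity gives $\mathbb{E}[\hat u^4]=3(\bm{a}^\top\Sigma_q\bm{a})^2=3$, so the denominator is $2\neq0$. Hence $\alpha=0$ and therefore $\beta=0$.

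The main obstacle is the whitening normalization. The paper builds $W_q$ from the linearly predicted covariance $\Sigma_{q,\text{l}}$, while $\mathbb{E}[\hat u^2]$ and $\mathbb{E}[\hat v]$ involve the true moments. The identities $\mathbb{E}[\hat u^2]=1$ and $\mathbb{E}[\hat v]=0$ hold exactly only if $W_q$ whitens the covariance and mean that appear in $M^{(3)},M^{(4)}$. I will state this explicitly: the moments are taken about the mean used in $\bm{\delta}$, and $W_q\Sigma_qW_q^\top=I$. Under the perturbative viewpoint of Assumption~\ref{ass:weak_ng}, these hold to leading order when the linear covariance is used. A fully general version would replace $1$ by $\mathbb{E}[\hat u^2]$ in the formulas, giving $\beta=-\alpha\,\mathbb{E}[\hat u^2]$ and denominator $\mathbb{E}[\hat u^4]-\mathbb{E}[\hat u^2]^2$.
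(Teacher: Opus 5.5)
Your proposal is correct and follows essentially the same route as the paper's proof. You form the normal equations for the regressors $1$ and $\hat u^2$, use $\mathbb{E}[\hat v]=0$ and $\mathbb{E}[\hat u^2]=\bm{a}^\top\Sigma_q\bm{a}=1$, and apply multilinearity to write $\mathbb{E}[\hat v\hat u^2]$ and $\mathbb{E}[\hat u^4]$ in terms of $M^{(3)}_q$ and $M^{(4)}_q$.

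Your additions are points the paper leaves implicit. The uniqueness argument via $\mathrm{Var}(\hat u^2)>0$ needs $\hat u^2$ to be not almost surely constant; nondegeneracy of the distribution alone is not enough. You also check via Isserlis that the Gaussian denominator equals $2$, and you note that $W_q$ must whiten the true covariance rather than the linearly predicted one.
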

\begin{proof}
The proof is established in three parts. First, we establish the suitability of the least-squares approach. Then, we apply the conditions of optimality, obtaining $\alpha, \ \beta$ in terms of whitened moments. Then, we relate these results back to the original moments.
\begin{enumerate}
\item We seek to minimize the expected square of the fit error, 
\begin{equation}
(\alpha,\beta)
=
\arg\min_{\alpha,\beta}
\;\mathbb{E}\!\left[\bigl(\hat v - \beta - \alpha\,\hat u^2\bigr)^2\right].
\label{eq:pop_min}
\end{equation}
Under Assumptions~\ref{ass:weak_ng} and~\ref{ass:bend_ansatz}, Eq.~\eqref{eq:pop_min} selects the statistically most accurate quadratic coupling compatible with the adopted ansatz and the retained moment information.
\item Solving Eq.~\eqref{eq:pop_min} by 
applying the first-order conditions of optimality w.r.t. $\alpha$ and $\beta$, we obtain:
\begin{subequations}
\label{eq:pop_normal}
\begin{align}
\mathbb{E}[\hat v] 
&= \beta + \alpha\,\mathbb{E}[\hat u^2],
\label{eq:pop_normal_1}
\\
\mathbb{E}[\hat v\,\hat u^2] 
&= \beta\,\mathbb{E}[\hat u^2] + \alpha\,\mathbb{E}[\hat u^4].
\label{eq:pop_normal_2}
\end{align}
\end{subequations}
These are two linear equations with two unknowns. With some further manipulations, the corrective coefficients are identified as below: 
\begin{subequations}
\label{eq:pop_solution}
\begin{align}
\alpha 
&= 
\frac{
\mathbb{E}[\hat v\,\hat u^2] - \mathbb{E}[\hat u^2]\,\mathbb{E}[\hat v]
}{
\mathbb{E}[\hat u^4] - \mathbb{E}[\hat u^2]^2
},
\label{eq:pop_alpha}
\\[6pt]
\beta
&=
\frac{
\mathbb{E}[\hat u^4]\,\mathbb{E}[\hat v] - \mathbb{E}[\hat u^2]\,\mathbb{E}[\hat v\,\hat u^2]
}{
\mathbb{E}[\hat u^4] - \mathbb{E}[\hat u^2]^2
}.
\label{eq:pop_beta}
\end{align}
\end{subequations}
\item Below we establish the necessary identities mapping transformed moments.
\begin{subequations}
\begin{align}
\label{eq:EfA2}
\mathbb{E}[\hat{u}] = & \ \mathbb{E}[\bm{a}^{\top}\bm{\delta}] =  \bm{a}^{\top}\left(\mathbb{E}[\bm{r}] - \mathbb{E}[\bm{\mu}_{q}]\right) = 0 \\
\mathbb{E}[\hat{v}] = & \ \mathbb{E}[\bm{b}^{\top}\bm{\delta}] =  \bm{b}^{\top}\left(\mathbb{E}[\bm{r}] - \mathbb{E}[\bm{\mu}_{q}]\right) = 0
\end{align}
\end{subequations}
Index notation is adopted for convenience, e.g. 
\begin{subequations}
\begin{align}
\label{eq:EfA3}
\begin{split}
\hat{u}^{2} = & \ \left(\bm{a}^{\top}\bm{\delta}\right)^{2} = \left(\sum_{j}a_{j}\delta_{j}\right)\left(\sum_{k}a_{k}\delta_{k}\right) \\ = & \  \sum_{j}\sum_{k}a_{j}a_{k}\delta_{j}\delta_{k}
\end{split} \\
\hat{v}\hat{u}^{2} = & \ \sum_{i}b_{i}\delta_{i}\cdot\hat{u}^{2} = \sum_{i}\sum_{j}\sum_{k}b_{i}a_{j}a_{k}\delta_{i}\delta_{j}\delta_{k} 
\end{align}
\end{subequations}
From such expressions it is simple to compute the necessary expected values, e.g.:
\begin{equation}
\label{eq:EfA4}
\begin{split}
\mathbb{E}[\hat{v}\hat{u}^{2}] = & \ \sum_{i}\sum_{j}\sum_{k}b_{i}a_{j}a_{k}\mathbb{E}[\delta_{i}\delta_{j}\delta_{k}] \\ = & \ \sum_{i,j,k} b_{i}a_{j}a_{k}M^{(3)}_{q,ijk}
\end{split}
\end{equation}
Establish one more identity by noting $\Sigma_{q} = R_{q}\Lambda_{q}R_{q}^{\top}$:
\begin{equation}
\label{eq:EfA4}
\begin{split}
\mathbb{E}[\hat{u}^{2}] = & \ \bm{a}^{\top}\Sigma_{q}\bm{a} \\ = & \  \begin{bmatrix}1 & 0\end{bmatrix}\Lambda_{q}^{-1/2}R_{q}^{\top}\Sigma_{q}R_{q}\left(\Lambda_{q}^{-1/2}\right)^{\top}\begin{bmatrix} 1 \\ 0 \end{bmatrix} \\ = & \ 1
\end{split}
\end{equation}
The final result for Eqs.~\eqref{eq:pop_alpha}-\eqref{eq:pop_beta} is thus obtained as Eq.~\eqref{eq:WhatIsAlpha}, as a function of skew and kurtosis.
\end{enumerate}
\end{proof}
\subsubsection{Second correction: long-axis asymmetry}
\begin{remark}[Quartile Asymmetries]\label{rem:gobananas2}
Another commonly observed effect in non-Gaussian statistics is that the symmetry of the confidence bounds along a direction $\bm{s}$ and its opposite $-\bm{s}$ are broken, due to a breakdown in quartile symmetries. We note that this effect is naturally more pronounced along directions with larger variance. 
\end{remark}
\begin{definition}[Cornish-Fisher Expansion]
\label{def:CF1}
The Cornish-Fisher expansion is a well-known asymptotic correction from univariate statistical analysis. It provides an approximation of the quantiles of a non-Gaussian distribution based on its cumulants \cite{HMF}. 
Below we provide just the first-order term of the expansion: 
\begin{equation}
\label{CFish1}
x(p) \approx \mu + \sigma \left(z + \frac{\gamma_{1}}{6}(z^{2}-1) \right)
\end{equation}
where $x$ follows a slightly non-Gaussian univariate distribution, with mean $\mu$ and standard deviation $\sigma$. Furthermore $z=\Phi^{-1}(P)$ where $\Phi$ is the cumulative distribution function of the standard normal distribution, i.e. $\Phi(3)\approx0.9987$. Lastly, $\gamma_{1}$ can be expressed in terms of skew and standard deviation:
\begin{equation}
\label{CFish2}
\gamma_{1} = \frac{\mu_{3}}{\sigma^{3}}, \ \ \ \mu_{n} = \mathbb{E}[(x-\mu)^{n}]
\end{equation}
\end{definition}
\begin{assumption}[Cornish-Fisher shift]
\label{ass:CF1}
Because quartile asymmetries are most pronounced along high-variance directions, we seek a correction to the long axis of the distribution along $\hat{u}$, and not along the short axis $\hat{v}$. This is similar to the logic by which we prioritized bending of the form $\hat{v} \propto \hat{u}^{2}$ but ignored the (assumed sub-dominant) analogous bending term $\hat{u} \propto \hat{v}^{2}$. 
We assume an additive $t$-periodic correction to the parameterization $\hat{u}(t) = k\cos{t}$ which obeys the following properties for $c(k)=\frac{\gamma_{1}}{6}(k^{2}-1)$:
\begin{subequations}
\label{CFish2}
\begin{align}
\delta\hat{u}(0) = \delta\hat{u}(\pi) = & \ c(k) \\
\delta\hat{u}(\frac{\pi}{2}) = \delta\hat{u}(\frac{3\pi}{2}) = & \ 0 \\
\delta\hat{u}'(0) = \delta\hat{u}'(\pi) = & \ 0 \\
\delta\hat{u}'(\frac{\pi}{2}) = \delta\hat{u}'(\frac{3\pi}{2}) = & \ 0 
\end{align}
\end{subequations}
where $( \ )' = \frac{\text{d}}{\text d t}( \ )$. The first two conditions state that along the direction $\hat{u}$, we expect to recover the univariate correction: The $k\sigma$ boundaries shift to $-k + c(k)$ for $\hat{u}<0$ and $k+c(k)$ for $\hat{u}>0$. The second two enforce no net change when $\hat{u} = 0$. 
The last four conditions are for regularity -- to avoid unphysical directional biases. Noting that the univariate Cornish-Fisher expansion provides no information about how the contour changes except purely along the long axis of the distribution, we furthermore require evenness of $\delta\hat{u}(t)$ about $t=0$ and $t=\pi$, as any other choice induces off-axis asymmetries that cannot be justified. The class of all functions satisfying these conditions can be expressed via the cosine series:
\begin{equation}
\label{duCS}
\delta\hat{u}_{q}(t) = \eta_{0,q} + \sum_{n\geq1}\eta_{n,q}\cos{(nt)}
\end{equation}
We noted previously that the first four moments of a non-Gaussian distribution are not enough to uniquely determine the new chance contour. In general a non-Gaussian distribution is characterized by infinitely many moments of descending importance. Thus we propose the simplest justifiable correction based on known information and constraints of Eq.~\eqref{CFish2}, arriving at our second ansatz:
\begin{equation}
\label{myansatzB}
\delta\hat{u} = \eta_{0} + \eta_{2}\cos{2t}
\end{equation}
\end{assumption}
\begin{proposition}[Long axis shift coefficients]
\label{prop:CF_coeffs}
Under Assumption~\ref{ass:CF1}, the desired coefficients of Eq.~\eqref{myansatzB} are
\begin{subequations}
\begin{align}
\eta_{0} = & \ \frac{1}{2}c(k) \\
\eta_{2} = & \ \eta_{0},
\end{align}
\end{subequations}
where $c(k)$ is given in terms of skew as
\begin{equation}
\label{ckSkew}
c(k)
= \frac{k^2 - 1}{6}
  \sum_{i,j,k}
  a_i\,a_j\,a_k\,M^{(3)}_{q,ijk}
\end{equation}
\end{proposition}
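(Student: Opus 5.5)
The plan is to treat this as a finite linear interpolation problem. I would impose the conditions of Assumption~\ref{ass:CF1} (Eq.~\eqref{CFish2}) on the two-parameter ansatz of Eq.~\eqref{myansatzB}, solve for $(\eta_0,\eta_2)$, and then check that the remaining conditions hold automatically. After that, I would express the Cornish--Fisher skewness $\gamma_1$ along $\hat u$ in terms of the slice moment tensor, reusing the index-notation identities from the proof of Proposition~\ref{prop:bend_coeffs}.

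First, I would evaluate $\delta\hat u(t)=\eta_0+\eta_2\cos 2t$ at the four nodes. At $t=0$ and $t=\pi$, $\cos 2t=1$, so both value conditions collapse to the single equation $\eta_0+\eta_2=c(k)$. At $t=\pi/2$ and $t=3\pi/2$, $\cos 2t=-1$, so both collapse to $\eta_0-\eta_2=0$. Solving this $2\times 2$ system gives $\eta_0=\eta_2=\tfrac12 c(k)$. Next, $\delta\hat u'(t)=-2\eta_2\sin 2t$ vanishes at every multiple of $\pi/2$, so the four regularity conditions hold identically. The required evenness about $t=0$ and $t=\pi$ is immediate from the cosine form. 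This shows that the ansatz is consistent with all eight conditions, even though it has only two free parameters.

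Second, I would identify $c(k)=\tfrac{\gamma_1}{6}(k^2-1)$ in terms of moments by applying Definition~\ref{def:CF1} to the scalar random variable $\hat u=\bm a^\top\bm\delta$, with $z$ replaced by the confidence scaling $k$. From the proof of Proposition~\ref{prop:bend_coeffs}, $\mathbb{E}[\hat u]=0$ and $\mathbb{E}[\hat u^2]=\bm a^\top\Sigma_q\bm a=1$, so $\sigma=1$ in the whitened coordinate and $\gamma_1=\mathbb{E}[\hat u^3]$. Expanding $\hat u^3=\sum_{i,j,l}a_ia_ja_l\delta_i\delta_j\delta_l$ and taking expectations exactly as in Eq.~\eqref{eq:EfA4} gives $\gamma_1=\sum a_ia_ja_l M^{(3)}_{q,ijl}$. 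Substituting this into $c(k)$ yields Eq.~\eqref{ckSkew}. Note that the summation index there shares its symbol with the confidence scale $k$; they are distinct, and I would rename the dummy index in the write-up.

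I expect the main point requiring care to be the whitening step rather than the algebra. The whitening uses the linearly predicted covariance $\Sigma_{q,\text{l}}$. The identity $\mathbb{E}[\hat u^2]=1$, and hence $\sigma=1$, holds exactly only if the moments are taken consistently with that covariance. If $M^{(3)}$ comes from a nonlinear propagation, I would either state this identification as part of the perturbative approximation of Assumption~\ref{ass:weak_ng}, or, more conservatively, note that the general formula divides by $(\bm a^\top\Sigma_q\bm a)^{3/2}$. That denominator reduces to one at first order.
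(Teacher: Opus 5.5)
Your proposal is correct and follows the same route as the paper's (very terse) proof. You impose the conditions of Assumption~\ref{ass:CF1} on the ansatz~\eqref{myansatzB}, solve the resulting $2\times 2$ system for $\eta_0=\eta_2=\tfrac12 c(k)$, and then express $\gamma_1=\mathbb{E}[\hat u^3]$ through the slice skew tensor using the whitening identities from Proposition~\ref{prop:bend_coeffs}. Your added remarks are accurate refinements that the paper leaves implicit: the derivative conditions hold identically, and $\mathbb{E}[\hat u^2]=1$ is exact only when the moments are consistent with the whitening covariance $\Sigma_{q,\text{l}}$.
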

\begin{proof}
The proof is established in the same manner as Proposition~\ref{prop:bend_coeffs}: setup and solve of constraint equations to be satisfied by the ansatz. Namely, the coefficients of Eq.~\eqref{myansatzB} must satisfy Eqs.~\eqref{CFish2}, then moment identities like Eqs.~\eqref{eq:EfA3} establish the result in terms of computed (non-whitened) higher-order moments. 
\end{proof}
\subsubsection{Final Result}
At this point, the corrective quantities have been expressed directly in terms of the propagated third- and fourth-order moments. The remaining step is to embed these quantities into a practical contour parameterization that reduces to the Gaussian ellipse when the non-Gaussian indicators vanish, while preserving the intended bending and long-axis asymmetry to first order. This yields a closed-form surrogate for the confidence boundary that is simple to evaluate and therefore suitable for algorithmic use inside trajectory optimization routines.
\begin{definition}[Banana contour parameterization]
\label{def:bent_param}
Incorporating the quadratic bending term and the uniaxial quartile shift, a moment-corrected contour in principal coordinates can be written. Revisiting Eqs.~\eqref{eq:local_uv2}, ~\eqref{myansatzII}, and~\eqref{myansatzB}, we note the zeroth-order solution can be written as $\hat{u} = k\cos{t}$, $\hat{v} = k\sin{t}$. We add the first-order corrections, noting that in the event that $\alpha=0$ and $c(k)=0$, the resulting $\hat{u}$ and $\hat{v}$ are unmodified from the Gaussian case. 
After rescaling (undoing the normalization), we make use of a double-angle identity to the correction in $u$, highlighting the cosine-squared commonality of the corrective terms in $u$ and $v$:
\begin{align}
u(t) &= \overline{a}\cos t + c(k)\sqrt{\lambda_{1}}\cos^{2}{t},\qquad \overline{a}=k\sqrt{\lambda_1}, \label{eq:u_banana}\\
v(t) &= \overline{b}\sin t + \alpha \sqrt{\lambda_2}\left(k^2\cos^2 t - 1\right),\qquad \overline{b}=k\sqrt{\lambda_2}, \label{eq:v_banana}
\end{align}
This is mapped back to $\bm{r}$ via Eq.~\eqref{MapPrincReg} (unchanged from the Gaussian case). This is a first-order correction, linear in perturbative parameters, which thus can be applied independently slice-by-slice (e.g. different $q$) to accommodate multiple bendings if needed, with each slice referencing different parts of the skew and kurtosis tensors. 
\end{definition}

\section{Numerical Example}
We demonstrate the proposed banana contour model on the impulsive maneuver targeting scenario from Ref.~\cite{boone_cdc}, with a spacecraft orbiting an asteroid modeled as a point mass.

\subsection{Dynamics model: two-body problem}
The dynamics of a spacecraft orbiting a single body can be approximated by assuming the central body is a point mass. The equations of motion for the two-body problem are

\begin{align}
& \ddot{x} = -\frac{\mu x}{r^3} &
& \ddot{y} = -\frac{\mu y}{r^3} &
& \ddot{z} = -\frac{\mu z}{r^3}
\end{align}
where the state vector $\bm{x} = \begin{bmatrix}x & y & z & \dot{x} & \dot{y} & \dot{z} \end{bmatrix}^T$ contains 3 position $(x,y,z)$ and 3 velocity $(\dot{x}, \dot{y}, \dot{z})$ components. The distance $r$ is the distance from the spacecraft to the central body (i.e., $r = \sqrt{x^2 + y^2 + z^2}$), and $\mu$ is the gravitational parameter for the central body.

\subsection{Scenario}
Consider the scenario from Ref.~\cite{boone_cdc} with a spacecraft orbiting a small asteroid ($\mu = 5.2 \: \textrm{m}^3/\textrm{s}^2$) in a circular terminator orbit. The initial state values for the spacecraft's nominal trajectory are $\bm{x} = \begin{bmatrix} -1000 & 0 & 0 & 0 & 0 & -7.211\times 10^{-2} \end{bmatrix}^T \: (\textrm{m}, \textrm{m}/\textrm{s})$. We consider the case where a control $\bm{u}_0$ is computed at time $t_0$ to place the spacecraft on a reconnaissance orbit that approaches the surface of the asteroid (see Fig.~\ref{f:asteroid_scenario_shorter}). We want to plan the maneuver to ensure that the spacecraft will not arrive too close to the asteroid at a target time $t_1$, given some initial state uncertainty. This may be operationally desirable for this type of mission, since state and navigation uncertainties are relatively large, and any unforeseen close approach may result in mission failure. 

The following initial state uncertainty values are applied at time $t_0$ to the spacecraft system, where $\epsilon\ll1$ (a case with initial position-dominant dispersions, as escape velocity for this problem is only $\sim$0.1 m/s): 
\begin{equation}
\bm{\sigma}_0 = \begin{bmatrix} 1.0 & 1.0 & 1.0 & \epsilon & \epsilon & \epsilon \end{bmatrix}^T (\textrm{m}, \textrm{m}/\textrm{s})
\end{equation}

We assume that $P_0 = \textrm{diag}([\bm{\sigma_0^2}])$ and that these initial state uncertainties are Gaussian. In order to introduce significant nonlinearities into the trajectory, the final time $t_1$ is set after 1.5 revolutions of the reconnaissance orbit from the maneuver location. This corresponds to roughly 23.6 hours. At the target time, the uncertainty distribution becomes decidedly non-Gaussian when expressed in Cartesian coordinates.


\begin{figure}
\begin{center}
\includegraphics[scale=0.6]{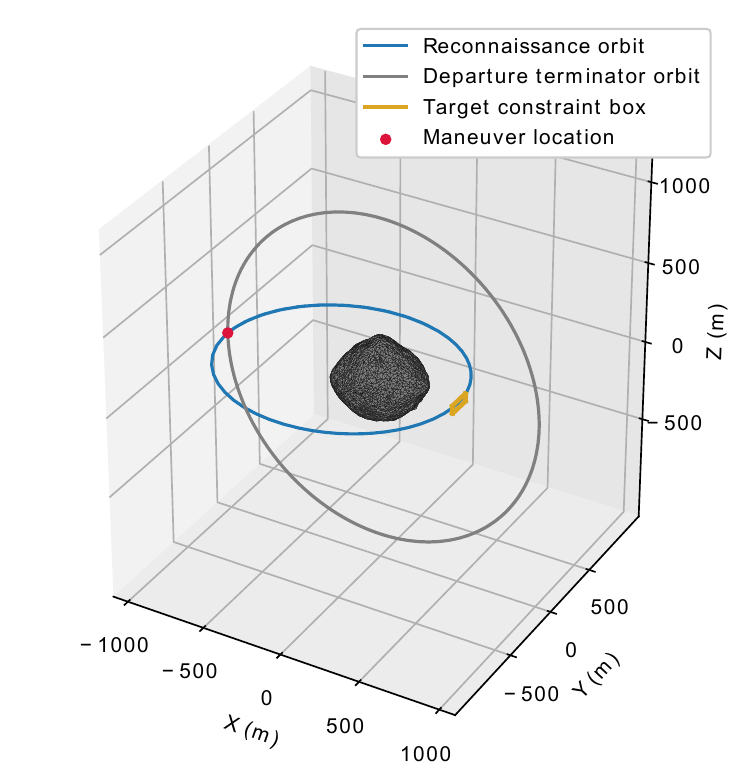}
\caption{Asteroid maneuver targeting scenario}
\label{f:asteroid_scenario_shorter}
\end{center}
\end{figure}

\subsection{Optimization with chance constraints}
We now demonstrate how the banana contour parameterization can be used to apply chance constraints to a non-Gaussian distribution. For this scenario, the target state constraint is defined as a box. This can be formulated using six chance constraints defining the boundaries of the target box in position space. The risk threshold for each chance constraint is set to $\Delta_1 = 0.01$; in other words, we want each chance constraint to be satisfied by $99\%$ of the state distribution at the target time. The six chance constraints can be expressed in the asteroid-centric inertial Cartesian frame:


\begin{align}
& \textrm{Pr} [x_1 \geq 495 \textrm{ m}] \geq 1 - \Delta_1 \\
& \textrm{Pr} [x_1 \leq 505 \textrm{ m}] \geq 1 - \Delta_1 \\
& \textrm{Pr} [y_1 \geq -80 \textrm{ m}] \geq 1 - \Delta_1 \\
& \textrm{Pr} [y_1 \leq 80 \textrm{ m}] \geq 1 - \Delta_1 \\
& \textrm{Pr} [z_1 \geq -25 \textrm{ m}] \geq 1 - \Delta_1 \\
& \textrm{Pr} [z_1 \leq 25 \textrm{ m}] \geq 1 - \Delta_1
\label{eq:cc_box}
\end{align}
This corresponds to a box roughly $250 \textrm{ m}$ altitude above the asteroid surface (see Fig.~\ref{f:asteroid_scenario_shorter}).

We seek to minimize the control norm at time $t_0$, so the objective function is defined as $J = \|\bm{u}_0\|$. An initial guess for the control vector is $\bm{u}_{0,\textrm{guess}} = \begin{bmatrix} 0.0 & 0.058 & 0.072111 \end{bmatrix} \textrm{m/s}$. In order to enforce the chance constraints, points are chosen from the constraint boundary. The optimization algorithm is set up to enforce that all selected points on the boundary satisfy all constraints. The optimization is performed using the SLSQP algorithm available with SciPy's \emph{minimize} function. In this approach, the optimal control parameters are computed by a sequential optimization process, with the confidence boundary recomputed for each iterate.

First, the optimization was performed using the linear covariance propagation, which computes and enforces satisfaction of an ellipse-shaped confidence boundary. Next, the optimization was performed using the contour defined by Eqs.~\ref{eq:u_banana} and~\ref{eq:v_banana}. In both cases, the optimization is able to converge on an impulsive maneuver that satisfies the desired boundary. The resulting covariance ellipses at time $t_1$ are shown in Fig.~\ref{f:linear_plot} for the linear covariance propagation, and in Fig.~\ref{f:banana_plot} for the banana contour parameterization. 

\begin{figure}
\begin{center}
\includegraphics[scale=1.2]{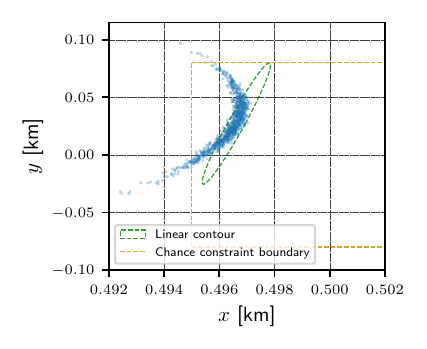}
\caption{Monte Carlo results for linear covariance ellipse chance constraints}
\label{f:linear_plot}
\end{center}
\end{figure}

\begin{figure}
\begin{center}
\includegraphics[scale=1.2]{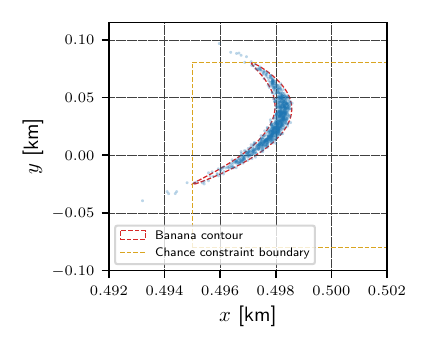}
\caption{Monte Carlo results for banana contour chance constraints}
\label{f:banana_plot}
\end{center}
\end{figure}

To evaluate the performance of the computed optimal control with the true statistical distribution, 5000 state perturbations were sampled from the initial state distribution, and the optimal controls from the linear covariance propagation and the banana contour parameterization were applied. 
These state vectors were propagated to time $t_1$, and the final positions are shown on Figs.~\ref{f:linear_plot} and~\ref{f:banana_plot}. We emphasize that the Monte Carlo samples are not used at all in the computation of the contour in Fig.~\ref{f:banana_plot}. Comparing the two figures, the banana contour method clearly conforms much better to the true physical shape of the state distribution at time $t_1$. As a result, the control computed using this chance constraint method results in improved performance in the Monte Carlo trial. Specifically, 92.2\% of the samples from the linear covariance simulation were found to satisfy all of the chance constraints, whereas 98.3\% of the samples from the non-Gaussian method were found to satisfy all constraints. 
Note that each individual chance constraint is enforced separately, and each individual chance constraint is satisfied to the desired 99.0\% probability. 

Overall, the new contour method is found to be more numerically efficient than the method developed in Ref.~\cite{boone_cdc} which uses a Gaussian mixture model to model the non-Gaussian distribution and an iterative risk allocation algorithm to optimally assign risk to the different mixands. By modeling the non-Gaussian distribution using a single contour boundary, this method obviates the need for an expensive iterative method. 

\section{Conclusions}
This paper presents a methodology for approximating non-Gaussian chance constraints via use of an analytic parameterization of the boundary of the distribution at a specified confidence level. The first four moments of the distribution are estimated via a deterministic technique and used to inform geometric corrections on the baseline Gaussian geometry. We apply this to so-called ``banana-shaped" distributions commonly observed in orbital mechanics problems. The approach is then applied to enforce chance constraints on a non-Gaussian distribution in a constrained optimal spacecraft maneuver targeting scenario. The algorithm accurately satisfies the desired chance constraint boundary, outperforming a traditional linear covariance-based strategy.

Future work will focus on improving the efficiency of the algorithm which enforces the geometric conditions of the chance constraint. The current algorithm applies a given path constraint (e.g. a half-plane constraint) jointly at many sampled points on the analytic contour. A better approach is to identify the worst-violating point and compute the correction based on that single constraint alone. This is more difficult due to the potential for the worst-violating point to jump back and forth between distinct ``lobes" of the non-convex contour of the non-Gaussian shape. In addition, the manner in which cumulative probability is treated in this methodology could perhaps be refined. We will also develop extensions of the geometric technique applied here, and investigate other methods of estimating necessary nonlinear statistical moments. Lastly, future work will develop more sophisticated examples with non-Gaussian chance constraints at multiple distinct times, and more complex dynamics. Ongoing numerical experiments in more non-Gaussian scenarios with more dynamic complexity, including a high-fidelity cislunar study, suggest that the present approach is effective beyond the simple asteroid-targeting example considered here.
\bibliographystyle{IEEEtran}
\bibliography{references}

\end{document}